\theoremstyle{plain}
\newtheorem{theorem}{Theorem}
\newtheorem*{problem*}{Problem}
\theoremstyle{definition}
\theoremstyle{remark}
\newtheorem*{remark*}{Remark}
\journal{TBD}
\begin{document}

\begin{frontmatter}
  \title{A lower bound on the size of an absorbing set in an arc-coloured tournament}
  \author[uca]{L.~Beaudou\corref{cor1}}
  \ead{laurent.beaudou@uca.fr}
  \author[mcgill]{L.~Devroye}
  \ead{lucdevroye@gmail.com}
  \author[udm]{G.~Hahn}
  \ead{hahn@iro.umontreal.ca}
  
  \cortext[cor1]{Corresponding author}
  \address[uca]{Universit\'e Clermont Auvergne, Clermont-Ferrand, France}
  \address[mcgill]{McGill University, Montr\'eal, Qu\'ebec, Canada}
  \address[udm]{Universit\'e de Montr\'eal, Montr\'eal, Qu\'ebec, Canada}
  \begin{abstract}
    Bousquet, Lochet and Thomass\'e recently gave an elegant proof
    that for any integer $n$, there is a least integer $f(n)$ such
    that any tournament whose arcs are coloured with $n$ colours
    contains a subset of vertices $S$ of size $f(n)$ with the property
    that any vertex not in $S$ admits a monochromatic path to some
    vertex of $S$. In this note we provide a lower bound on the value
    $f(n)$.
  \end{abstract}
 
\end{frontmatter}

A {\em directed graph} or {\em digraph} $D$ is a pair $(V,A)$ where
$V$ is a set called the {\em vertex set} of $D$ and $A$ is a subset of $V^2$
called the {\em arc set} of $D$. When $(u,v)$ is in $A$, we say there is an
arc from $u$ to $v$. A {\em tournament} is a digraph for which there is
exactly one arc between each pair of vertices (in only one direction). 

Given a colouring of the arcs of a digraph, we say that a vertex $x$
{\em is absorbed} by a vertex $y$ if there is a monochromatic path from $x$
to $y$. A subset $S$ of $V$ is called an {\em absorbing set} if any
vertex in $V \setminus S$ is absorbed by some vertex in $S$.

In the early eighties, Sands, Sauer and Woodrow~\cite{ssw1982}
suggested the following problem (also attributed to Erd\H{o}s in the
same paper): 

\begin{problem*}[Sands, Sauer and Woodrow \cite{ssw1982}]
  For each $n$, is there a (least) positive integer $f(n)$ so that
  every finite tournament whose arcs are coloured with $n$ colours contains an
  absorbing set $S$ of size $f(n)$ ?
\end{problem*}

This problem has been investigated in weaker settings by forbidding
some structures (see \cite{m1988}, \cite{gr2004}), or by making
stronger claims on the nature of the tournament (see
\cite{pg2014}). Hahn, Ille and Woodrow~\cite{hiw2004} also approached
the infinite case.

Recently, P\'alv\H{o}lgyi and Gy\'arf\'as~\cite{pg2014} have shown
that a positive answer to this problem would imply a new proof of a
former result from B\'ar\'any and Lehel~\cite{bl1987} stating that any
set $X$ of points in $\mathbb{R}^d$ can be covered by $f(d)$ $X$-boxes
(each box is defined by two points in $X$). In 2017, Bousquet Lochet
and Thomass\'e~\cite{blt2017} gave a positive answer to the problem of
Sands, Sauer and Woodrow.

\begin{theorem}[Bousquet, Lochet and Thomass\'e \cite{blt2017}]
Function $f$ is well defined and $f(n) = O(\ln(n) \cdot n^{n+2})$.
\end{theorem}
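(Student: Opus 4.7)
The plan is to induct on the number of colours $n$. The base case $n=1$ follows from the classical fact that every tournament contains a \emph{king}, a vertex reached from every other by a directed path of length at most two, so $f(1)=1$; the Sands--Sauer--Woodrow theorem handles $n=2$ similarly. For larger $n$, I would combine a greedy absorption step, which reduces the problem to a smaller instance, with an inductive use of absorbing sets for fewer colours.

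The central lemma to establish is structural: in any $n$-arc-coloured tournament $T$ on $N$ vertices there is a vertex $v$ whose \emph{absorbing ball} $A(v)=\{u:u\text{ has a monochromatic path to }v\}$ is large, in the sense that $V\setminus A(v)$ either has size bounded by a function $r(n)$ or, when restricted to the subtournament it induces, uses at most $n-1$ colours effectively. To prove this, I would fix a colour $c$, examine the $c$-monochromatic strongly connected components, apply the inductive hypothesis on the remaining $n-1$ colours within each component, and then combine the partial absorbers using how these components are related via global $c$-reachability in $T$.

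Granted the structural lemma, the algorithm proceeds greedily: repeatedly add such a vertex $v$ to the tentative absorbing set $S$ and replace $T$ by the subtournament induced on $V\setminus A(v)$. A standard set-cover-style analysis shows that after $O(\ln n)$ greedy steps the residual is small enough to be adjoined wholesale, or else the inductive hypothesis on fewer colours applies directly. Writing out the resulting recurrence for $f(n)$ then yields the claimed $O(\ln(n)\cdot n^{n+2})$ upper bound, with the $n^{n+2}$ factor arising from the multiplicative blow-up across the induction on $n$ and the $\ln(n)$ factor from the greedy covering.

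The main obstacle is the structural lemma itself. The difficulty is that different colour classes can interact in essentially unrelated ways: a vertex absorbing many others through colour $c$ may absorb almost none through another colour, and monochromatic reachability does not combine transitively across colours. Handling this requires a careful choice of how to decompose the tournament by colour classes and how to lift absorbing sets from the $(n-1)$-coloured subproblems to the full problem; this lifting is where the roughly $n^{n}$ inductive factor materialises, and I expect it to be the most delicate part of the argument.
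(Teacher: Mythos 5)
First, a point of order: this theorem is not proved in the note you were given; it is quoted from Bousquet, Lochet and Thomass\'e~\cite{blt2017}, and the note's own contribution is the \emph{lower} bound of Theorem~\ref{thm:main}. So there is no in-paper proof to compare yours against, and I can only assess your sketch on its own terms. Your base cases are fine ($f(1)=1$ via the sink strong component, or a ``king'' in the reversed tournament, and $f(2)=1$ by Sands--Sauer--Woodrow), but the rest is an outline whose load-bearing step is missing.

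The genuine gap is exactly where you locate it: the structural lemma. Worse, as stated the lemma is false, and the counterexample is the construction used in this very note to prove Theorem~\ref{thm:main}. Take the tournament on $\{1,\ldots,m\}\times\mathcal{P}$ with bags $B(P)$, colour $n$ inside each bag and colours drawn from $P'\setminus P$ on arcs from $B(P)$ to $B(P')$. Since no colour appears both inside a bag and on arcs incident to it from outside, every monochromatic path is either confined to a bag or has length one. Hence for any vertex $v$, the absorbing ball $A(v)$ meets each bag other than $v$'s own only in the direct in-neighbours of $v$, i.e.\ in about half of that bag. The residual $V\setminus A(v)$ therefore has size $\Theta(|V|)$, still meets every bag, and its induced subtournament still uses all $n$ colours in an essential way (every pair of bags is still represented, so every colour still governs some reachability); neither branch of your dichotomy holds once $m$ is large. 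Consequently the greedy loop you describe removes only a constant fraction of the vertices per round and needs $\Omega(\log|V|)$ rounds, so it can only yield absorbing sets whose size grows with the tournament rather than a bound depending on $n$ alone; relatedly, your ``$O(\ln n)$ greedy steps'' claim is unsupported, since a set-cover analysis gives $O(\log|V|)$ steps and nothing in the sketch ties the number of rounds to $n$. Any correct proof must choose absorbers using global structure across colours --- this is precisely what makes the theorem hard, and it is why the actual argument of~\cite{blt2017} works with the monochromatic-reachability quasi-orders and leverages the two-colour Sands--Sauer--Woodrow theorem inside an induction on $n$, rather than a largest-ball greedy selection.
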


In this note, we provide a lower bound on the value of $f(n)$.

\begin{theorem}
  For any integer $n$, let 
  \begin{equation*}
     p =  {n-1 \choose \left\lfloor \frac{n-1}{2} \right\rfloor}.
  \end{equation*} There is a tournament arc-coloured with $n$
  colours, such that no set of size less than $p$ absorbs the rest of
  the tournament, and thus $f(n) \geq p$.
  \label{thm:main}
\end{theorem}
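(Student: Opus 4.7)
The plan is to exhibit an explicit $n$-arc-coloured tournament $T$ and argue that its minimum absorbing set has size at least $p = \binom{n-1}{\lfloor (n-1)/2 \rfloor}$. Sperner's theorem identifies $p$ as the maximum size of an antichain in the Boolean lattice on $[n-1]$, which strongly suggests that the middle layer $\binom{[n-1]}{k}$ with $k = \lfloor (n-1)/2 \rfloor$ should play a central role.

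First I would distinguish a set $V_0$ of $p$ ``primary'' vertices, one for each subset $A \in \binom{[n-1]}{k}$. Since every tournament on $V_0$ alone admits a dominating set of logarithmic size, $T$ must be strictly larger than $V_0$. I would therefore enlarge $V_0$ by auxiliary vertices and choose the orientation and colouring---using in particular the $n$-th colour to isolate the auxiliaries---so that the primary vertices behave like an antichain with respect to monochromatic reachability. The arcs between two distinct primary vertices $v_A, v_B$ would be defined by an explicit rule involving the symmetric difference $A \triangle B$, for example reading off the smallest differentiating element to fix both orientation and colour.

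The crucial combinatorial step is to establish what one may call a \emph{unique-absorber property}: for every vertex $w \in V(T)$, at most one primary vertex $v_A \in V_0$ admits a monochromatic path ending at $w$. The incomparability of middle-layer subsets---the Sperner antichain condition---is exactly what rules out two distinct primary vertices sharing a common monochromatic successor, once the colouring rule is chosen to link colour classes to set-theoretic data. Granting this property, the lower bound is immediate by an injective assignment: for each $A \in V_0$, any absorbing set $S$ must contain either $v_A$ itself or a distinct absorber $s_A \in S$; the unique-absorber property makes the assignment $A \mapsto v_A$ or $s_A$ injective into $S$, and hence $|S| \geq |V_0| = p$.

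The main obstacle is engineering the coloring so that the unique-absorber property actually holds while $T$ remains a tournament with only $n$ colours. A naive recipe such as ``colour the arc $v_A v_B$ by $\min(A \triangle B)$'' confines monochromatic paths to length one, whereupon the classical logarithmic dominating-set bound for tournaments gives only $|S| \geq \log p$, far short of Sperner. Finding the right rule---one that permits longer monochromatic paths but forces each such path to land on a single primary target, using the auxiliary vertices and the distinguished $n$-th colour to absorb everything extraneous---is where the real work of the proof lies.
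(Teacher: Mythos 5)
Your proposal is a programme rather than a proof: you concede at the end that the colouring rule realizing your ``unique-absorber property'' is still to be found, and unfortunately that property cannot be realized at all. A path of length~$1$ is monochromatic under every colouring, so any vertex $w$ with two in-neighbours among the primary vertices $v_A, v_B$ is already a common monochromatic successor of both. In the sub-tournament induced on your $p$ primary vertices there are $\binom{p}{2}$ arcs, so some primary vertex has in-degree at least $(p-1)/2$ inside $V_0$; hence for $p \geq 4$ the unique-absorber property fails no matter how you orient, colour, or pad with auxiliaries. The injective assignment $A \mapsto s_A$ that you build on top of it therefore has no foundation. You also correctly diagnose the other obstruction --- if all cross-primary monochromatic paths have length~$1$, absorption degenerates to domination, and a single copy of each middle-layer set can be dominated by logarithmically many vertices --- but you draw the wrong conclusion from it, namely that one must engineer longer monochromatic paths with a single vertex per set. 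The paper resolves it differently.

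The actual construction keeps your middle-layer antichain $\mathcal{P} = \binom{[n-1]}{\lfloor (n-1)/2 \rfloor}$ but replaces each $P \in \mathcal{P}$ by a \emph{bag} of $m$ clones, for $m$ large. Arcs inside a bag get the reserved colour $n$; an arc from bag $P$ to bag $P' \neq P$ gets a colour in $P' \setminus P$, which is nonempty precisely because $\mathcal{P}$ is an antichain of equal-sized sets (this is the only use of Sperner-type structure, and it is used to make the colouring well defined, not to force uniqueness of absorbers). One checks that every monochromatic path is either confined to a bag or has length~$1$. Now any candidate absorbing set $S$ with $|S| = p-1$ misses some bag entirely, so each of the $m$ vertices of that bag must send a direct arc into $S$; orienting all arcs uniformly at random, a fixed $S$ survives with probability $(1 - 2^{-(p-1)})^m$, and a union bound over the $\binom{mp}{p-1}$ choices of $S$ (polynomial in $m$ against an exponentially small factor) shows that for $m$ large enough some outcome defeats every $S$. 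The logarithmic dominating-set bound is not contradicted because it is $O(\log(mp))$, which exceeds $p$ once $m$ is huge. If you want to salvage your write-up, the two ingredients to import are the blow-up by $m$ clones per antichain element and the probabilistic (or counting) argument over orientations; the unique-absorber route should be abandoned.
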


\begin{proof}
  Let $n$ be an integer and let $\mathcal{P}$ be the family of all
  subsets of $\{1,\ldots,n-1\}$ of size exactly $\left\lfloor
  \frac{n-1}{2} \right\rfloor$. Note that $\mathcal{P}$ has size $p$.

  For any integer $m$, let $V(m)$ be the set $\{1,\ldots,m\} \times
  \mathcal{P}$ and let $\mathcal{T}(m)$ be the probability space
  consisting of arc-coloured tournaments on $V(m)$ where the
  orientation of each arc is fairly determined (probability
  $1/2$ for each orientation), and the colour of an arc from
  $(i,P)$ to $(j,P')$ is $n$ if $P=P'$, and picked randomly in $P'
  \setminus P$ otherwise. Note that if $P$ and $P'$ are distinct,
  then $P' \setminus P$ can not be empty since both sets have same
  size.

  For any set $P$ in $\mathcal{P}$, we shall denote by $B(P)$ the set
  of vertices having $P$ as second coordinate. We may call this set,
  the {\em bag of} $P$. A key observation is that the arcs coming in a
  bag, the arcs leaving a bag and the arcs contained in a bag share no
  common colour. As a consequence, a monochromatic path is either
  contained in a bag or of length 1.
  
  Let us find an upper bound on the probability that such a tournament
  is absorbed by a set of size strictly less than $p$. Let $S$ be a
  subset of $V(m)$ of size  $p-1$. There must exist a set $P$
  in $\mathcal{P}$ such that $S$ does not hit $B(P)$. For $S$ to be
  absorbing, each vertex in $B(P)$ must have an outgoing arc to some
  vertex of $S$. Let $x$ be a vertex in $B(P)$,
  \begin{equation*}
    Pr(x \text{ is absorbed by } S) = 1 - \left( \frac{1}{2} \right)^{p-1}.
  \end{equation*}
  The events of being absorbed by $S$ are pairwise independent for
  elements of $B(P)$. Then,
  \begin{equation*}
    Pr(B(P) \text{ is absorbed by } S) = \left(1 - \left( \frac{1}{2} \right)^{p-1}\right)^m.
  \end{equation*}
  This is an upper bound on the probability for $S$ to absorb the
  whole tournament. We may sum this for every possible choice of $S$.
  \begin{equation*}
    Pr(\text{some } S \text{ is absorbing}) \leq {mp \choose p-1}  \left(1 - \left( \frac{1}{2} \right)^{p-1}\right)^m.
  \end{equation*}

  Finally, by using the classic inequalities ${n \choose k} \leq
  \left(\frac{en}{k}\right)^k $ and $(1+x)^n \leq e^{nx}$, we obtain
  \begin{equation*}
    Pr(\text{some } S \text{ is absorbing}) \leq \left(\frac{emp}{p-1}\right)^{p-1} e^{-m\left(\frac{1}{2}\right)^{p-1}}.
  \end{equation*}
  When $m$ tends to infinity, this last quantity tends to 0. So that,
  for $m$ large enough, there is a tournament which is not absorbed by
  $p-1$ vertices.  
\end{proof}

\begin{remark*}
By Stirling's approximation we derive that the bound obtained in
Theorem~\ref{thm:main} is of order $\frac{2^n}{\sqrt{n}}$.
\end{remark*}

\section*{Acknowledgements}

The authors are thankful to V. Chv\'atal and the (late) ConCoCO
seminar for allowing this research to happen.

\end{document}